\documentclass[12pt]{amsart}
\usepackage{amsmath}
\setlength{\textheight}{23cm}
\setlength{\textwidth}{16cm}
\setlength{\topmargin}{-0.8cm}
\setlength{\parskip}{0.3\baselineskip}
\hoffset=-1.4cm

\usepackage{amssymb}
\usepackage[all,cmtip]{xy}

\DeclareMathAlphabet{\mathpzc}{OT1}{pzc}{m}{it}
\newcommand{\ncom}{\newcommand}

\ncom{\rar}{\rightarrow}
\ncom{\imply}{\Rightarrow}
\ncom{\lrar}{\longrightarrow}
\ncom{\into}{\hookrightarrow}
\ncom{\onto}{\twoheadrightarrow}
\ncom{\ov}{\overline}
\ncom{\m}{\mbox}
\ncom{\sta}{\stackrel}
\ncom{\invlim}{\varprojlim}
\ncom{\xhat}{\widehat}

\ncom{\vspc}{\vspace{3mm}}
\ncom{\End}{{\cE}nd}
\ncom{\tensor}{\otimes}

\ncom{\al}{\alpha}
\ncom{\cHom}{{\mathcal Hom}}

\ncom{\A}{{\mathbb A}}
\ncom{\comx}{{\mathbb C}}
\ncom{\E}{{\mathbb E}}
\ncom{\F}{{\mathbb F}}
\ncom{\G}{{\mathbb G}}
\ncom{\K}{{\mathbb K}}
\ncom{\Le}{{\mathbb L}}
\ncom{\N}{{\mathbb N}}

\ncom{\p}{{\mathbb P}}
\ncom{\Q}{{\mathbb Q}}
\ncom{\R}{{\mathbb R}}
\ncom{\Z}{{\mathbb Z}}

\ncom{\f}{\dfrac}

\ncom{\wtil}{\widetilde}

\ncom{\ci}{{\mathpzc i}}

\ncom{\cA}{{\mathcal A}}
\ncom{\cC}{{\mathcal C}}
\ncom{\cE}{{\mathcal E}}
\ncom{\cF}{{\mathcal F}}
\ncom{\cG}{{\mathcal G}}
\ncom{\cH}{{\mathcal H}}
\ncom{\cI}{{\mathcal I}}
\ncom{\cJ}{{\mathcal J}}
\ncom{\cL}{{\mathcal L}}
\ncom{\cM}{{\mathcal M}}
\ncom{\cN}{{\mathcal N}}
\ncom{\cO}{{\mathcal O}}
\ncom{\cP}{{\mathcal P}}
\ncom{\cQ}{{\mathcal Q}}
\ncom{\cR}{{\mathcal R}}
\ncom{\cS}{{\mathcal S}}
\ncom{\cT}{{\mathcal T}}
\ncom{\cU}{{\mathcal U}}
\ncom{\cV}{{\mathcal V}}
\ncom{\cW}{{\mathcal W}}
\ncom{\cX}{{\mathcal X}}
\ncom{\cY}{{\mathcal Y}}
\ncom{\cZ}{{\mathcal Z}}

\ncom{\cSU}{{\mathcal S \mathcal U}}
\ncom{\eop}{{\hfill $\Box$}}
\ncom{\isom}{\cong}

\ncom{\todo}{{\textbf{TODO}}}

\newtheorem{theorem}{Theorem}[section]
\newtheorem{lemma}[theorem]{Lemma}

\newtheorem{question}[theorem]{Question}

\newtheorem{answer}[theorem]{Answer}

\bibliographystyle{plain}

\begin{document}
\baselineskip=16pt

\title{Six generated ACM bundle on a hypersurface is split}
\author{Amit Tripathi}
\address{Department of Mathematics, 
Indian Statistical Institute, Bangalore - 560059, India}
\email{amittr@gmail.com}
\thanks{This work was supported by a postdoctoral fellowship from National Board for Higher Mathematics.}

\subjclass{14J60}
\keywords{Vector bundles, hypersurfaces, arithmetically Cohen-Macaulay}

\begin{abstract} Let $X$ be a smooth projective hypersurface. In this note we show that any six generated arithmetically Cohen-Macaulay vector bundle over $X$ splits if $\text{dim}\,X \geq 6$. 
\end{abstract}
\maketitle
\section{Introduction}

We work over an algebraically closed field of characteristic 0.

Let $X \subset \p^{n+1}$ be a smooth hypersurface. We say that a vector bundle $E$ on $X$ is split if it is a direct sum of line bundles on $X$. We say that a vector bundle is $k$-generated if $k$ is the smallest integer such that there exists a surjection $\oplus_{i=1}^k \cO_X(a_i) \onto E$. A convenient notation for a coherent sheaf $\cF$ on $X$ is:
$$H^i_*(X, \cF) := \bigoplus_{m \in \Z} H^i(X, \cF(m))$$ 

We say a bundle is \textit{split} if it is a direct sum of line bundles. An \textit{arithmetically Cohen-Macaulay} (ACM) bundle on $X$ is a vector bundle $E$ satisfying $$H^i(X, E(m)) = 0, \,\, \forall \, m \in \Z \,\,\text{and} \,\,\, 0 < i < \text{dim}\, X$$

This definition is equivalent to saying that $\Gamma_*(X,E)$ is a maximal Cohen-Macaulay as $S_X$-module where $S_X$ is the graded ring corresponding to $X$.

A split bundle on a hypersurface or a projective space is obviously an ACM bundle. A theorem of Horrocks \cite{Hor} tells that over projective spaces, the converse is also true - A vector bundle on a  projective space splits if and only if it is an arithmetically Cohen-Macaulay bundle. One can ask if Horrock's criterion is true for arithmetically Cohen-Macaulay bundles over hypersurfaces?

The answer is no as there are examples of indecomposable arithmetically Cohen-Macaulay bundles over hypersurfaces (see \cite{M-R-R} or \cite{M-R-R3}). Though there is a conjecture in this direction:
\begin{proof}[ Conjecture (Buchweitz, Greuel and Schreyer \cite{BGS}):] Let $X \subset \p^n$ be a hypersurface. Let $E$ be an ACM bundle on $X$. If rank $E < 2^e$, where $e = \left [ \displaystyle{\frac{n-2}{2}} \right ]$, then $E$ splits. (Here $[q]$ denotes the largest integer $\leq q$.) 
\end{proof} 

This conjecture can not be strengthened further, as there exists  indecomposable arithmetically Cohen-Macaulay bundles of rank $2^e$ on such hypersurfaces for all degrees (see the construction in \cite{BGS}).

For low degree cases, we refer the reader to \cite{Kno} for $d = 2$ and to \cite{C-H} for the case of $d = 3$ surfaces in $\p^3$. 
For rank 2 on a general hypersurface of low degree in $\p^4$ and $\p^5$ we suggest \cite{C-M1}, \cite{C-M2}, \cite{C-M3}. 

For general hypersurfaces, Sawada \cite{S} found a sufficient condition for a arithmetically Cohen-Macaulay bundle to split depending upon the dimension as well as the degree of the hypersurface and rank of the vector bundle. His method uses matrix factorization (see Eisenbud \cite{Eisen2} for a background).

Rank two case is understood fairly well. A rank 2 arithmetically Cohen-Macaulay bundle on a hypersurface $X$ splits if:

\begin{enumerate}
\item dim$(X) \geq 5$ (see \cite{Kle} and \cite{M-R-R}).
\item dim$(X) = 4$ and $X$ is general hypersurface and $d \geq 3$ (see \cite{M-R-R} and \cite{R}). 
\item dim$(X) = 3$ and $X$ is general hypersurface and $d \geq 6$ (see \cite{M-R-R2} and \cite{R}).
\end{enumerate}

In a previous work \cite{Trip}, we showed that any rank 3 (resp. rank 4) ACM bundle over a hypersurface of dim$(X) \geq 7$ (resp. dim$(X) \geq 9$) is split. Our method employed a cohomological chase over a Koszul complex. In this note, we find an improvement for six generated ACM bundles,
\begin{theorem} \label{result_main} Let $E$ be any ACM bundle on a smooth hypersurface $X \subset \p^{n+1}$ where $n \geq 6$. If $E$ is six generated then $E$ is a split bundle.
\end{theorem}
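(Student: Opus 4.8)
The plan is to convert a minimal presentation of $E$ into a short exact sequence and then force that sequence (or a closely related one) to split. Fix a minimal surjection $\phi\colon F:=\bigoplus_{i=1}^{\mu}\cO_X(a_i)\onto E$ with $\mu=\mu(E)\le 6$, and let $K=\ker\phi$, so that $0\to K\to F\to E\to 0$. I would first check that $K$ is again an ACM vector bundle: it is a subbundle of $F$, the long exact cohomology sequence together with the ACM property of $E$ and of the split bundle $F$ gives $H^i_*(X,K)=0$ for $2\le i\le\dim X$, and $H^1_*(X,K)=0$ because $\Gamma_*(X,K)$ is a first syzygy of the maximal Cohen--Macaulay module $\Gamma_*(X,E)$, and syzygies of MCM modules over a hypersurface ring are MCM. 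Now argue by induction on $\mu$: if $E$ has a line bundle summand we split it off, strictly decreasing $\mu$, so we may assume $E$ has no line bundle summand (equivalently, the matrix factorization of $\Gamma_*(X,E)$ is reduced). A rank-one ACM sheaf on the smooth $X$ would be a line bundle, hence its own summand, so $\operatorname{rank}E\ge 2$, and since $K$ is also summand-free (being the syzygy) $\operatorname{rank}K=\mu-\operatorname{rank}E\ge 2$; thus $\operatorname{rank}E\le\mu-2\le 4$.

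Next I would dispose of every case except the crucial one. The two-periodicity of the matrix factorization provides, besides $0\to K\to F\to E\to 0$, a co-syzygy sequence $0\to E(-d)\to F'\to K\to 0$ with $F'$ a sum of $\mu$ line bundles and $d=\deg X$. If $\operatorname{rank}E=2$, then $E$ is split by the cited rank-two result in dimension $\ge 5$. If $\operatorname{rank}K=2$ — which covers $\operatorname{rank}E=4$, and also $\operatorname{rank}E=3,\ \mu=5$ — then $K$ is split by the same result, whence $\operatorname{Ext}^1_X(K,E(-d))=\bigoplus_k H^1\!\bigl(X,E(-d-c_k)\bigr)=0$ by the ACM property of $E$; so the co-syzygy sequence splits, $E(-d)$ is a direct summand of the split bundle $F'$, and therefore (a direct summand of a graded free $S_X$-module is graded free) $E$ is split. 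This leaves only $\mu=6$ with $\operatorname{rank}E=\operatorname{rank}K=3$. For $\dim X\ge 7$ one is finished by the rank-three splitting theorem of the earlier work, so the genuinely new situation is $\dim X=6$, with $E$ and its co-syzygy $K$ both rank-three, summand-free, six generated ACM bundles on the smooth six-fold $X$.

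For that case the strategy is a cohomological chase over the (length six) periodic resolution attached to the six-generator presentation. It suffices to show that $0\to K\to F\to E\to 0$ splits, i.e. — since $E^\vee$ ACM forces $\operatorname{Ext}^1_X(E,F)=0$ — that $\operatorname{Ext}^1_X(E,K)=H^1\!\bigl(X,\mathcal{H}om(E,K)\bigr)=0$. Put $\mathcal{G}=E^\vee\otimes K$ and $\mathcal{E}=\mathcal{E}nd(E)$. Tensoring the two sequences $0\to K\to F\to E\to 0$ and $0\to E(-d)\to F'\to K\to 0$ by $E^\vee$ and using that the twists $E^\vee(a_i)$ and $E^\vee(b_j)$ are ACM, the long exact sequences produce, for every $t$, isomorphisms $H^j(X,\mathcal{G}(t))\cong H^{j-1}(X,\mathcal{E}(t))$ and $H^{j-1}(X,\mathcal{E}(t))\cong H^{j-2}(X,\mathcal{G}(t+d))$ whenever all the degrees involved lie in $[1,\dim X-1]=[1,5]$; splicing them gives $H^j(X,\mathcal{G}(t))\cong H^{j-2}(X,\mathcal{G}(t+d))$ for $3\le j\le 5$, so that $H^1(X,\mathcal{G}(t))\cong H^3(X,\mathcal{G}(t-d))\cong H^5(X,\mathcal{G}(t-2d))$. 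I would then try to force $H^1(X,\mathcal{G})$ to vanish by feeding the top group $H^5(X,\mathcal{G}(\,\cdot\,))$ — which Serre duality on the six-fold rewrites as $H^1\!\bigl(X,\mathcal{H}om(K,E)(d-8-\cdot)\bigr)^{\vee}$ — back through the symmetric chase for $\mathcal{H}om(K,E)$, using heavily that $\mathcal{G}$ and $\mathcal{H}om(K,E)$ are built from rank-three, six generated ACM bundles (so that every sheaf occurring is again at most six generated, and its line bundle twists stay in the ACM vanishing range), together with the vanishing of $H^1(X,\mathcal{G}(m))$ for $|m|\gg 0$ (Serre vanishing at one extreme, its Serre dual at the other).

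The hard part is precisely closing this chase at $\dim X=6$: the two periodic sequences only allow one to shift cohomological degree within the interval $[1,\dim X-1]$, and when $\dim X=6$ the resulting chain of isomorphisms returns to $H^1$ (up to a twist by $\cO_X(-d)$ and Serre duality) without passing through a range where top-cohomology vanishing or the ACM hypothesis directly kills it, so the vanishing must be squeezed out of this closed loop. Making the numerics work — six generators matched with dimension six, so that the chain is just long enough to reach a vanishing rather than stopping one step short — is exactly the point at which $\dim X\ge 6$, rather than $\dim X\ge 7$, becomes sufficient, and it is where the real work lies. Everything else (ACM-ness of the syzygy, the periodic resolution, the reductions of the first two paragraphs, and the descent from a split sequence to a split bundle) is routine.
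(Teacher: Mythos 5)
Your opening reductions are sound and coincide with the paper's: the cases $\mathrm{rk}\,E = 6,5,4,2$ (and the elimination of a line-bundle summand of the syzygy) are handled exactly as in the paper, via the rank-two splitting theorem of Mohan Kumar--Rao--Ravindra and the observation that a split syzygy, or one with a line-bundle summand, forces $E$ to split (Lemma \ref{G_direct_summand}). You also correctly isolate the one genuinely new case: $\mathrm{rk}\,E = \mathrm{rk}\,K = 3$, six generators, $\dim X = 6$. But for that case you have no proof. The chase you propose --- tensoring the two periodic sequences by $E^{\vee}$ and splicing to get $H^1(X,\mathcal{G}(t)) \cong H^3(X,\mathcal{G}(t-d)) \cong H^5(X,\mathcal{G}(t-2d))$ --- stalls exactly where you say it does: on a six-fold $H^5$ is the top intermediate cohomology and carries no automatic vanishing, so the loop closes without producing $\operatorname{Ext}^1_X(E,K)=0$. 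This is precisely the obstruction that limits the rank-three theorem of \cite{Trip} to $\dim X \ge 7$, and the sentence ``the vanishing must be squeezed out of this closed loop'' is an acknowledgement of the gap, not an argument. As written, the proposal proves nothing beyond what \cite{Trip} already gives.

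The paper's actual mechanism for the critical case is different in kind and is worth absorbing. Since $\mathrm{rk}\,E=3$, one takes the third exterior power of the minimal resolution $0 \rar \wtil{F_1} \rar \wtil{F_0} \rar E \rar 0$ on $\p^{n+1}$; the cokernel $\cF$ of $\wedge^3\wtil{F_1} \rar \wedge^3\wtil{F_0}$ is an ACM bundle on $X$ sitting in two short exact sequences with $\wedge^3 F_1$, $\wedge^3 F_0$ and the rank-one bundle $\wedge^3 G$, from which $H^1_*(\cF)=0$ and hence $\cF$ splits. A pullback diagram then shows $\cF_1 := \ker(\wedge^3 F_0 \onto \wedge^3 E)$ is split. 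Finally, the $Sym$--$\wedge$ resolution of $\wedge^3 E$ attached to $0 \rar G \rar F_0 \rar E \rar 0$, combined with $H^1_*$-vanishing for $Sym^2 G$ and $Sym^3 G$ (obtained from the $\wedge$--$Sym$ sequences for \eqref{eqn_G_E}, and this is where $\dim X \ge 6$ enters), exhibits the split bundle $\cF_1$ as a direct summand of $G \otimes \wedge^2 F_0$, so $G$ acquires a line-bundle summand and Lemma \ref{G_direct_summand} finishes. The exterior-power step --- exploiting that $\wedge^3 E$ and $\wedge^3 G$ are line bundles when the rank is exactly $3$ --- is the idea your proposal is missing, and no amount of tightening the periodicity chase will substitute for it.
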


\section{Preliminaries} \label{sec_prel}

Let $X \subset \p^{n+1}$ be a hypersurface of degree $d \geq 2$. Let $E$ be a rank $r$ ACM bundle on $X$ which is $k$-generated. The following facts are well known and will be used several times. The proofs for the same can be found (for instance) in \cite{Trip} (section 2).
\begin{itemize}
\item There exist split bundles $\wtil{F_1}, \wtil{F_0}$ of rank $k$ on $\p^{n+1}$ and a minimal resolution of $E$ on $\p^{n+1}$,
\begin{eqnarray*} 
0 \rar \wtil{F_1} {\rar} \wtil{F_0} \rar E \rar 0
\end{eqnarray*}
\item Restricting the above resolution to $X$ gives,
\begin{eqnarray}  \label{eqn_E_1_step}
0 \rar G \rar F_0 \rar E \rar 0
\end{eqnarray}
\begin{eqnarray}  \label{eqn_G_E}
0 \rar E(-d) \rar F_1 \rar G \rar 0
\end{eqnarray}

where $F_1, F_0$ are split bundles over $X$ of rank $k$ and $G$ is an arithmetically Cohen-Macaulay bundle.
\end{itemize}

Let $k$ be a positive integer. For any sequence $0 \rar \cF_0 \rar \cF_1 \rar \cF_2 \rar 0$ of vector bundles on a projective variety $Z$, there exists a resolution of $k$-th exterior power of $\cF_2$, 
$$0 \rar Sym^k(\cF_0) \rar \cdots \rar Sym^{k-i}(\cF_0) \otimes \wedge^i \cF_1 \rar \cdots \rar \wedge^k \cF_1 \rar \wedge^k \cF_2 \rar 0$$

In \cite{Trip} we have called this the \textit{$Sym-\wedge$ sequence of index $k$} \footnote[1]{We were unable to find any standard terminology in the literature for the given resolution.} associated to the given short exact sequence. Similarly there exists a resolution of $k$-th symmetric power of $\cF_2$ which we denoted as \textit{$\wedge-Sym$ sequence of index $k$} associated to the given sequence. 
$$0 \rar \wedge^k(\cF_0) \rar \cdots \rar \wedge^{k-i}(\cF_0) \otimes Sym^i \cF_1 \rar \cdots Sym^k \cF_1 \rar Sym^k \cF_2 \rar 0$$
For details we refer to \cite{Chris} and references provided therein.

Following lemma is from \cite{Trip}:
\begin{lemma} \label{G_direct_summand} Let $E$ be any bundle (not necessarily ACM) on a hypersurface $X \subset \p^{n+1}, \, n \geq 3$. Assume further that $H^1_*(X, E^{\vee}) = 0$. Let the exact sequence $0 \rar G \rar F_0 \rar E \rar 0$ be a minimal (1-step) resolution of $E$ on $X$. If $G$ admits a line bundle as a direct summand, then $E$ is split.
\end{lemma}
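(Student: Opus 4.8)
The plan is to show that the assumed line-bundle summand of $G$ is forced, via the vanishing $H^1_*(X,E^\vee)=0$, to split off $F_0$ as well, and then to cancel it against the resolution. Write $G \isom L \oplus G'$ with $L \isom \cO_X(a)$, let $p : G \rar L$ be the projection onto this summand, and let $\phi : G \into F_0$ denote the inclusion of the resolution, so that $\sigma := \phi \circ \iota_L : L \into G \into F_0$ is the composite of the summand inclusion with $\phi$. By construction $p\circ\iota_L=\mathrm{id}_L$. First I would dualise. Since $E, F_0, G$ are all locally free, applying $\cHom(-,\cO_X)$ to the resolution yields the short exact sequence
$$0 \rar E^\vee \rar F_0^\vee \rar G^\vee \rar 0 .$$

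The next step is to promote $p$ to a map defined on all of $F_0$. Tensoring the dual sequence by $L$ and taking cohomology produces the exact piece
$$\mathrm{Hom}(F_0,L) \rar \mathrm{Hom}(G,L) \rar H^1(X,E^\vee(a)) ,$$
and the last group is a direct summand of $H^1_*(X,E^\vee)$, hence vanishes by hypothesis. Therefore $\mathrm{Hom}(F_0,L) \rar \mathrm{Hom}(G,L)$ is surjective, so $p$ extends to a map $\pi : F_0 \rar L$ with $\pi \circ \phi = p$. Restricting to the summand gives $\pi \circ \sigma = \mathrm{id}_L$, so $\sigma$ is a split injection and $L$ is a direct summand of the split bundle $F_0$; write $F_0 \isom L \oplus F_0'$ with $F_0'$ split and $\sigma$ the inclusion of the first factor.

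Finally I would cancel this common copy of $L$. Because $\sigma(L) \subseteq \mathrm{im}\,\phi = G = \ker(F_0 \onto E)$, the composite $L \into F_0 \onto E$ is zero, so in the splittings above the inclusion becomes block-diagonal, $\phi = \mathrm{id}_L \oplus \delta$ for some $\delta : G' \rar F_0'$; passing to cokernels collapses the identity block and leaves $0 \rar G' \rar F_0' \rar E \rar 0$, a resolution of the \emph{same} $E$ with $F_0'$ split of strictly smaller rank. The main obstacle is to reconcile this with minimality. Minimality forces $\mathrm{im}\,\phi \subseteq \mathfrak{m}\,F_0$, so every component of $\sigma$ lies in the irrelevant ideal; yet the retraction $\pi$ with $\pi\circ\sigma=\mathrm{id}_L$ exhibits a unit combination of exactly those components. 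Extracting the consequence of this tension — equivalently, iterating the cancellation to peel off every line-bundle summand of $G$ until the remaining syzygy vanishes — is what pins down $\overline{F_0} \onto E$ as an isomorphism, so that $E \isom \overline{F_0}$ is split. I expect this last reconciliation with minimality, rather than the cohomological lifting, to carry the real content of the argument.
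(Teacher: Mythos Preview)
The paper itself does not prove this lemma; it is quoted from \cite{Trip}, so there is no in-text argument to compare against. Your approach is the expected one and the substantive steps are correct: dualising and using $H^1_*(X,E^\vee)=0$ to lift the projection $p:G\to L$ to a retraction $\pi:F_0\to L$, then block-diagonalising $\phi=\mathrm{id}_L\oplus\delta$ to obtain the shorter exact sequence $0\to G'\to F_0'\to E\to 0$ with $F_0'$ split of rank one less. (Strictly, one should invoke Krull--Schmidt, cf.\ \cite{At}, to know that the complement of a line bundle inside a split bundle is again split, but this is routine.)

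The only weakness is the closing paragraph. The ``iteration'' you propose cannot be carried out in general: after one cancellation the new syzygy is $G'$, and the hypotheses say nothing about $G'$ having a line-bundle summand, so the process may stall with $G'\neq 0$ and $E$ not yet visibly split. The argument actually terminates in a single step, via the other route you allude to: the surjection $F_0'\onto E$ with $F_0'$ split and $\mathrm{rk}\,F_0'<\mathrm{rk}\,F_0$ already contradicts minimality of the original resolution. Thus the hypotheses of the lemma are mutually inconsistent and the stated implication holds; its real content is the contrapositive, namely that under $H^1_*(X,E^\vee)=0$ the syzygy $G$ in a \emph{minimal} one-step resolution can never admit a line-bundle summand. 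Your instinct that the reconciliation with minimality, rather than the cohomological lifting, carries the weight is exactly right --- just draw the conclusion from the single contradiction instead of trying to iterate.
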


\section{Proof of the theorem}

\begin{proof}[Proof of theorem \ref{result_main}]: Let $E$ be a six generated arithmetically Cohen-Macaulay bundle on $X$ where dim$(X) \geq 6$. We take a minimal (1-step) resolution of $E$ on $X$: $$0 \lrar G \rar F_0 \rar E \rar 0$$ By assumption rk$(F_0) = 6$ therefore rk$(E) \leq 6$. The following cases are easily resolved:
\begin{enumerate}
\item If rk$(E) = 6$ then $E \cong F_0$ is split.
\item  If rk$(E) = 5$ then $G$ is a line bundle whence by lemma \ref{G_direct_summand} $E$ splits. 
\item If rk$(E) = 4$ then $G$ splits as it is then an ACM bundle of rank 2 on $X$ (see \cite{M-R-R}) whence $E$ splits (again by lemma \ref{G_direct_summand}).
\item If rk$(E) = 2$ then splitting is by results from \cite{M-R-R}.
\end{enumerate}

This leaves the case rk$(E) = 3$ open. $G$ (an ACM bundle) is also of rank 3 for this case. We have a minimal resolution of $E$ on $\p^{n+1}$: $$0 \rar \wtil{F_1} {\rar} \wtil{F_0} \rar E \rar 0$$ 
Taking exterior product, we get $$0 \rar \wedge^3 \wtil{F_1} {\rar} \wedge^3 \wtil{F_0} \rar \cF \rar 0$$
where $\cF$ is a coherent sheaf with support on $X$. It can be verified that $\cF$ is arithmetically Cohen-Macaulay sheaf which means that it is (infact) an ACM vector bundle on $X$ as $X$ is smooth. Restricting the above sequence to $X$ gives: 
\begin{eqnarray} \label{main_eqn}
0 \rar Tor^1(\cO_X, \cF) \rar \wedge^3 {F_1} {\rar} \wedge^3 {F_0} \rar \cF \rar 0
\end{eqnarray}

To compute the \textit{Tor} term, we tensor the following sequence by $\cF$:
$$0 \rar \cO_{\p ^{n+1}}(-d) \rar \cO_{\p ^{n+1}} \rar \cO_X \rar 0$$
to get $Tor^1(\cO_X, \cF) = \cF(-d)$. Thus we get the sequence: $$0 \rar \cF(-d) \rar \wedge^3 {F_1} {\rar} \wedge^3 {F_0} \rar \cF \rar 0$$

The map $F_1 \rar F_0$ factors via $G$ (see section \ref{sec_prel}). By functoriality of exterior product, the map $\wedge^3 {F_1} {\rar} \wedge^3 {F_0}$ will factor via $\wedge^3 G$. Thus the sequence above breaks up into 2 short exact sequences:
\begin{eqnarray} \label{eqn_F_G}
0 \rar \cF(-d) \rar \wedge^3 {F_1} \rar \wedge^3 G \rar 0
\end{eqnarray}\begin{eqnarray} \label{eqn_G_F}
0 \rar \wedge^3 G \rar \wedge^3 {F_0} \rar \cF \rar 0
\end{eqnarray}

Above sequences along with the fact that $G$ is rank 3 and $F_1, F_0$ are split bundles imply that $\cF$ is a split bundle - for example by verifying that $H^1_*(\cF) = 0$ which implies that equation \eqref{eqn_F_G} splits. 

Let $\cF_1 = \text{kernel}(\wedge^3 F_0 \onto \wedge^3 E)$. We have the following pullback diagram:
\begin{equation} \label{dgm_filtration_F_0}
\begin{gathered} \xymatrix{0 \ar[r]& \wedge^3 G \ar[r] & \cF_1 \ar[r] \ar@{^{(}->}[d]& \cE \ar[r] \ar@{^{(}->}[d]& 0 \\ 0 \ar[r] & \wedge^3 G \ar@{=}[u]  \ar[r] & \wedge^3 F_0 \ar[r] \ar[d] & \cF \ar[r] \ar[d] & 0& \\ & & \wedge^3 E \ar@{=}[r] \ar[d] & \wedge^3 E \ar[d]& \\ & & 0 & 0
}\end{gathered}
\end{equation}

Here the map $\wedge^3 G \into \cF_1$ is coming from the filtration diagram for $\wedge^3 F_0$ as induced by the sequence $0 \rar G \rar F_0 \rar E \rar 0$ and $\cE = \text{coker}(\wedge^3 G \rar \cF_1)$.

The above diagram (and the fact that $\cF$ is split) will imply that $\cE$ is split which in turn means that $\cF_1$ is a split bundle. Lemma \ref{lemma_final} (below) will now imply that $E$ is split.
\end{proof}

We complete the proof of the above theorem with following lemma which uses similar form of cohomological chase as done in \cite{Trip}:
\begin{lemma} \label{lemma_final} Let $E$ be an ACM vector bundle of rank 3 on a hypersurface $X$ of dimension $\geq 6$. Let $G, F_0$ denote the vector bundles on $X$ coming from the 1-step resolution of $E$ as in sequence \eqref{eqn_E_1_step}. Let $\cF_1 = \text{kernel}(\wedge^3 F_0 \onto \wedge^3 E)$ (as assumed in the proof of theorem \ref{result_main}). If $\cF_1$ is split then $E$ is split.
\end{lemma}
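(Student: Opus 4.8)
The plan is to derive from the hypothesis ``$\cF_1$ is split'' that $G$ admits a line bundle as a direct summand; Lemma \ref{G_direct_summand} then finishes the proof, since $E$ is ACM, hence $E^\vee$ is ACM and $H^1_*(X,E^\vee)=0$ (as $n\ge 3$), while $0\to G\to F_0\to E\to 0$ is a minimal $1$-step resolution of $E$. Moreover, once $G=L\oplus G'$ with $L$ a line bundle, $G'$ is a rank $2$ ACM bundle on $X$ and so is split by \cite{M-R-R} (recall $n\ge 5$); thus in fact $G$ splits.

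First I would write down the $Sym$-$\wedge$ sequence of index $3$ attached to $0\to G\to F_0\to E\to 0$. As $E$ has rank $3$, $\wedge^3 E=\det E$ is a line bundle and the resolution reads
$$0\to Sym^3 G\to Sym^2 G\otimes F_0\to G\otimes\wedge^2 F_0\to\wedge^3 F_0\to\det E\to 0,$$
with $\cF_1=\ker(\wedge^3 F_0\to\det E)$ the image of $G\otimes\wedge^2 F_0$. Splitting it into the short exact sequences
$$0\to K\to G\otimes\wedge^2 F_0\to\cF_1\to 0,\qquad 0\to Sym^3 G\to Sym^2 G\otimes F_0\to K\to 0,$$
and using that $\wedge^2 F_0$ (like every $\wedge^j F_0$) is a direct sum of line bundles, so that $G\otimes\wedge^2 F_0$ is ACM, the first sequence transports the vanishing $H^i_*(X,\cF_1)=0$ for $0<i<n$ — available because $\cF_1$ is split — into vanishing of $H^i_*(X,K)$ in a range of $i$ and into surjectivity and injectivity statements on the $H^0_*$ groups.

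The second ingredient is the filtration of $\cF_1\subset\wedge^3 F_0$ induced by $0\to G\to F_0\to E\to 0$: its successive quotients, read from the sub-object up, are $\wedge^3 G=\det G$, then $\wedge^2 G\otimes E\cong G^\vee\otimes E\otimes\det G$, then $G\otimes\wedge^2 E\cong G\otimes E^\vee\otimes\det E$, where $G$, $G^\vee$, $E$, $E^\vee$ are all ACM. Running the cohomological vanishing extracted from ``$\cF_1$ split'' through this filtration, and simultaneously through the analogous $Sym$-$\wedge$ resolutions and filtrations of the lower stages $\ker(\wedge^2 F_0\to\wedge^2 E)$ and $G$ itself (and through the data for the bundle $\cF$ of the proof of Theorem \ref{result_main}), I would carry out a cohomological chase in the style of \cite{Trip} to force the vanishing that exhibits a line bundle as a direct summand of $G$. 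This finishes the argument as explained above.

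The main obstacle is the middle of this chase: the graded pieces $\wedge^2 G\otimes E$ and $G\otimes\wedge^2 E$ are tensor products of ACM bundles, and the terms $Sym^2 G$, $Sym^3 G$ of the resolution are symmetric powers of an ACM bundle, so none of their intermediate cohomology is available to us directly. The function of the hypothesis $\dim X\ge 6$ is to give just enough slack — threading the length-$3$ resolution of $\det E$ and the two-step filtration of $\cF_1$ at once — that every ``bad'' intermediate cohomology group that turns up is squeezed to $0$; organizing this bookkeeping is where the real work lies, and it is precisely the extra rigidity of the six-generated, rank-$3$ situation that brings the bound down from the $n\ge 7$ needed in \cite{Trip} for general rank-$3$ ACM bundles to $n\ge 6$.
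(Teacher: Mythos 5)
Your setup is the right one --- the $Sym$-$\wedge$ sequence of index $3$ for $0 \rar G \rar F_0 \rar E \rar 0$, broken into $0 \rar Sym^3 G \rar Sym^2 G \otimes F_0 \rar K \rar 0$ and $0 \rar K \rar G \otimes \wedge^2 F_0 \rar \cF_1 \rar 0$, with the goal of producing a line bundle summand of $G$ so that Lemma \ref{G_direct_summand} applies --- but the proposal stops exactly where the proof actually happens, and the one concrete cohomological step you do propose goes in the wrong direction. From $0 \rar K \rar G\otimes\wedge^2 F_0 \rar \cF_1 \rar 0$, the splitness of $\cF_1$ gives $H^i_*(K)=0$ only for $i\ge 2$; the group you actually need, $H^1_*(K)$, is the cokernel of $H^0_*(G\otimes\wedge^2 F_0)\rar H^0_*(\cF_1)$ and cannot be extracted from this sequence. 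The point of the paper's argument is that $H^1_*(K)=0$ is proved \emph{independently} of $\cF_1$, from the other short exact sequence $0 \rar Sym^3 G \rar Sym^2 G\otimes F_0 \rar K \rar 0$; once one has it, $\text{Ext}^1(\cF_1,K)=\bigoplus_m H^1(K(m))=0$ because $\cF_1$ is split, so $G\otimes\wedge^2 F_0\cong K\oplus\cF_1$ acquires a line bundle direct summand, and Krull--Schmidt then passes that summand to a twist of $G$. That is the only use made of the hypothesis on $\cF_1$.

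The intermediate cohomology of $Sym^2 G$ and $Sym^3 G$, which you declare ``not available to us directly'' and defer to unspecified bookkeeping, is precisely what the paper computes, and it is the missing idea: apply the $\wedge$-$Sym$ sequences of index $2$ and $3$ not to $0\rar G\rar F_0\rar E\rar 0$ but to the \emph{other} half of the restricted resolution, $0\rar E(-d)\rar F_1\rar G\rar 0$. There the terms $Sym^i F_1$ are split and the terms $\wedge^j E(-d)$ for $j=1,2,3$ are all ACM (since $E$ has rank $3$, $\wedge^2 E\cong E^\vee\otimes\det E$ and $\wedge^3 E$ is a line bundle), so a straightforward chase gives $H^i_*(Sym^2 G)=0$ for $1\le i\le \dim X-3$ and $H^i_*(Sym^3 G)=0$ for $1\le i\le \dim X-4$; feeding these into $0\rar Sym^3 G\rar Sym^2 G\otimes F_0\rar K\rar 0$ yields $H^1_*(K)=0$ exactly when $\dim X\ge 6$ (the binding constraint is $H^2_*(Sym^3 G)=0$). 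Without this dual-resolution computation the argument does not close, and the filtration of $\cF_1$ with graded pieces $\wedge^3 G$, $\wedge^2 G\otimes E$, $G\otimes\wedge^2 E$ that you invoke is not needed at all.
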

\begin{proof} Consider the following $\wedge - Sym$ sequence for index 3 associated with sequence \eqref{eqn_G_E}:

$
0 \rar \wedge^3 E(-d) \rar \wedge^2 E(-d) \otimes F_1 \rar E(-d) \otimes Sym^{2} F_1 \rar Sym^3 F_1 \rar Sym^3 G \rar 0$

Breaking it into short exact sequences and using the fact that as $E$ is rank 3 therefore $\wedge^3 E(-d), \,\wedge^2 E(-d), \,E(-d)$ are all arithmetically Cohen-Macaulay bundle and $Sym^i F_1$ is a split bundle for all $i$, we get $$H^i_*(Sym^3 G) = 0 \,\,\,\text{for}\,\,\, i = 1,2 \ldots ,\text{dim}(X) - 4$$ Similarly we write the $\wedge - Sym$ sequence for index 2: $$0 \rar \wedge^2 E(-d) \rar E(-d) \otimes F_1 \rar Sym^{2} F_1 \rar Sym^2 G \rar 0$$ which gives $H^i_*(Sym^2 G) = 0$ for $i = 1,2 \ldots ,\text{dim}(X) - 3$.

Now we write the $Sym - \wedge$ sequence for index 3 for the short exact sequence \eqref{eqn_E_1_step}:$$0 \rar Sym^3 G \rar Sym^2 G \otimes F_0 \rar G \otimes \wedge^2 F_0 \rar\wedge^3 F_0 \rar \wedge^3 E \rar 0$$ 

Breaking it up we get $0 \rar Sym^3 G \rar Sym^2 G \otimes F_0 \rar J_1 \rar 0$. The vanishing results for cohomologies of $Sym^3 G, \, Sym^2 G$ implies that if dim$(X) \geq 6$ then $H^1_*(X, J_1) = 0$. Now $J_1$ further fits into the following short exact sequence: \begin{eqnarray}
\label{eqn_G_split}
0 \rar J_1 \rar G \otimes \wedge^2 F_1 \rar \cF_1 \rar 0 \end{eqnarray} where $\cF_1$ is split by assumption. Therefore \eqref{eqn_G_split} is a split sequence and hence $G$ admits a line bundle as a direct summand. Lemma \ref{G_direct_summand} tells us that $E$ is split. 
\end{proof}


\begin{thebibliography}{}

\bibitem{At} Michael F. Atiyah, \textit{On the Krull-Schmidt theorem with application to sheaves}, Bulletin de la S.M.F., tome 84 (1956), 307-317.
\bibitem{BGS} R.-O. Buchweitz, G.-M. Greuel, and F.-O. Schreyer, \textit{Cohen-Macaulay modules on hypersurface singularities II}, Inv. Math. 88 (1987), 165-182.
\bibitem{Chris} Chris Brav (http://mathoverflow.net/users/4659/chris-brav), \textit{How to resolve a wedge product of vector bundles}, URL (version: 2010-10-13): http://mathoverflow.net/q/41990
\bibitem{C-H} M. Casanellas and R. Hartshorne, \textit{ACM bundles on cubic surfaces}, J. Eur. Math. Soc. 13 (2011), 709-731. 
\bibitem{C-M1} L. Chiantini and C. Madonna, \textit{ACM bundles on a general quintic threefold}, Matematiche (Catania) 55(2000), no. 2 (2002), 239-258.
\bibitem{C-M2} L. Chiantini and C. Madonna, \textit{A splitting criterion for rank 2 bundles on a general sextic threefold}, Internat. J. Math. 15 (2004), no. 4, 341-359.
\bibitem{C-M3} L. Chiantini and C. Madonna, \textit{ACM bundles on a general hypersurfaces in $\p^5$ of low degree}, Collect. Math. 56 (2005), no. 1, 85-96.
\bibitem{SGA7II} [SGA7II] P. Deligne, N. Katz, \textit{S\'{e}minaire de G\'{e}om\'{e}trie Alg\'{e}brique du Bois-Marie - 1967-1969. Groupes de monodromie en g\'{e}om\'{e}trie alg\'{e}brique. II,} LNM 340 (1973), Springer-Verlag . 
\bibitem{Eisen2} D. Eisenbud, \textit{Homological algebra on a complete intersection}, Trans. of Amer. Math. Soc. Vol. 260, No. 1 (1980), 35-64.
\bibitem{Eisen} D. Eisenbud, \textit{Commutative algebra with a view toward algebraic geometry}, Springer-Verlag (1995). 
\bibitem{Hart} R. Hartshorne, \textit{Ample subvarieties of algebraic varieties}, LNM 156, Springer-Verlag (1970). 
\bibitem{Hor} G. Horrocks, \textit{Vector bundles on the punctured spectrum of a local ring}, Proc. London Math. Soc. 14 (1964), 689-713. 
\bibitem{Kle} H. Kleppe, \textit{Deformation of schemes defined by vanishing of pfaffians}, Jour. of algebra 53 (1978), 84-92.
\bibitem{Kno} H. Kn\"{o}rrer, \textit{Cohen-Macaulay modules on hypersurface singularities I}, Inv. Math. 88 (1987), 153-164.
\bibitem{M-R-R} N. Mohan Kumar, A.P. Rao and G.V. Ravindra, \textit{Arithmetically Cohen-Macaulay bundles on hypersurfaces}, Commentarii Mathematici Helvetici, 82 (2007), No. 4, 829--843.
\bibitem{M-R-R2} N. Mohan Kumar, A.P. Rao and G.V. Ravindra, \textit{Arithmetically Cohen-Macaulay bundles on three dimensional hypersurfaces}, Int. Math. Res. Not. IMRN (2007), No. 8, Art. ID rnm025, 11pp.
\bibitem{M-R-R3} N. Mohan Kumar, A.P. Rao and G.V Ravindra, \textit{On codimension two subvarieties in hypersurfaces}, Motives and Algebraic Cycles: A Celebration in honour of Spencer Bloch, Fields Institute Communications vol. 56, 167--174, eds. Rob de Jeu and James Lewis.
\bibitem{R} G.V Ravindra, \textit{Curves on threefolds and a conjecture of Griffiths-Harris}, Math. Ann. 345 (2009), 731-748.
\bibitem{S} T. Sawada, \textit{A sufficient condition for splitting of arithmetically Cohen-Macaulay bundles on general hypersurfaces}, Comm. Algebra 38 (2010), no. 5, 1633-1639. 
\bibitem{Trip} A. Tripathi, \textit{ Splitting of low rank ACM bundles on hypersurfaces of high dimension}, arXiv:1304.2135 [math.AG]
\end {thebibliography}

\end{document}